\numberwithin{equation}{section} 
\begin{document}
\ID{E13-xxx} 
 \DATE{Final, 2013-xx-xx} 
 \PageNum{1}
 \Volume{201x}{}{3x}{x} 
 \EditorNote{$^*$Received May x, 201x; revised x x, 201x. } 

\abovedisplayskip 6pt plus 2pt minus 2pt \belowdisplayskip 6pt
plus 2pt minus 2pt
\def\vsp{\vspace{1mm}}
\def\th#1{\vspace{1mm}\noindent{\bf #1}\quad}
\def\proof{\vspace{1mm}\indent{\bf Proof}\quad}
\def\no{\nonumber}

\newenvironment{prof}[1][Proof]{\indent\textbf{#1}\quad }
{\hfill $\Box$\vspace{0.7mm}}
\def\q{\quad} \def\qq{\qquad}
\allowdisplaybreaks[4]

\newcommand{\opP}{{\mathbf \Phi}}
\newcommand{\opr}{{\mathbf r}}
\newcommand{\opM}{{\mathbf M}}
\newcommand{\opR}{{\mathbf R}}
\newcommand{\opG}{{\mathbf G}}
\newcommand{\opF}{{\mathbf F}}
\newcommand{\opL}{{\mathbf L}}
\newcommand{\opLo}{{\mathbf L}_1}
\newcommand{\opB}{\mathbf B}
\newcommand{\opA}{\mathbf A}
\newcommand{\opBo}{{\mathbf B}_1}
\newcommand{\opBt}{{\mathbf B}_2}
\newcommand{\opAo}{{\mathbf A}_1}
\newcommand{\opAt}{{\mathbf A}_2}

\newcommand{\beq}{\begin{equation}}
\newcommand{\eeq}{\end{equation}}



\AuthorMark{Sidorov,  Sidorov \&  Li}                             

\TitleMark{\uppercase{Nonlinear systems: stability, branching and blow-up}}  

\title{\uppercase{Basins of attraction of  nonlinear systems' equilibrium points: stability, branching and blow-up}        
\footnote{$^{\star}$ Corresponding author}}                 
\author{\sl{Nikolai \uppercase{Sidorov}}}    
   {Irkutsk State  University, K.Marx Str. 1, \zipcode{664025}, Irkutsk, Russia\\
E-mail\,$:$ sidorov@math.isu.runnet.ru }

\author{\sl{Denis  \uppercase{Sidorov$^{\star}$}}}    
{
Energy Systems Institute of SB RAS, Lermontov Str. 130, \zipcode{664033}, Irkutsk, Russia\\
 Institute of Solar-Terrestrial Physics of SB  RAS, Lermontov Str. 126a, \zipcode{664033}, Irkutsk, Russia\\
    E-mail\,$:$ dsidorov@isem.irk.ru}  

\author{\sl{Yong \uppercase{Li}}}    
{ Hunan University,  \zipcode{410082}, Changsha, People's Republic of China\\
    E-mail\,$:$ yongli@hnu.edu.cn}  

\maketitle%

\Abstract{The nonlinear dynamical model consisting the system of differential and operator equations is studied. Here differential equation contains a nonlinear operator acting in Banach space,  a nonlinear operator equation with respect to two elements from different Banach spaces.
This system is assumed to enjoy the stationary state (rest points or equilibrium). 
The Cauchy problem with the initial condition with respect to one of the desired functions is formulated. The second function controls  the corresponding nonlinear dynamic process, the initial conditions are not set. The sufficient conditions of
the global classical solution's existence and stabilization at infinity to the rest point are formulated. It is demonstrated that a solution can be constructed
by the method of successive approximations under the suitable sufficient conditions. If the conditions of the main theorem are not satisfied, then several solutions may exist. Some of  solutions can blow-up in a finite time, while others stabilize to a rest point. The special case of  
considered  dynamical models are nonlinear
differential-algebraic equation (DAE)  have successfully modeled  various phenomena in circuit analysis, power systems, chemical process simulations and many other nonlinear processes.
Three examples illustrate the  constructed theory and the main theorem.
Generalization on the non-autonomous dynamical systems concludes the article. 
}      

\Keywords{nonlinear dynamics, stability, rest point,  branching solution, Cauchy problem, DAE, bifurcation, equilibrium,  blow-up.}        

\MRSubClass{45E10; 65R20}      

\section{Introduction}

Let us consider the system
\begin{equation}
\opA\frac{dx}{dt} = \opF(x,u),
\label{eq1}
\end{equation}
\begin{equation}
0 = \opG(x,u).
\label{eq2}
\end{equation}
Here linear operator $\opA : D \subset X \rightarrow E$
has bounded inverse, nonlinear operators $\opF: X \dotplus U \rightarrow E,$
$\opG: X \dotplus U \rightarrow U$
are continuous in the neighborhoods $||x-x_0||_X \leq r_1,$
$||u-u_0||_U \leq r_2$
of real Banach spaces $X, U;$ $E$ is linear real normed space. 
It is assumed that the following operators  decompositions are valid
\beq
\opF(x,u) = \opF(x_0,u_0)+\opA_1(x-x_0)+\opA_2(u-u_0)+\opR(x,u);
\label{eq3}
\eeq
\beq
||\opR(x,u)|| =o(||x-x_0||+||u-u_0||);
\label{eq4}
\eeq
\beq
\opG(x,u) = \opG(x_0,u_0) + \sum\limits_{k=1}^n d^k \bigl( \opG(x_0,u_0); (x-x_0, u-u_0)\bigr) + \opr(x,u);
\label{eq5}
\eeq
\beq
||\opr(x,u)||_U =o\bigl ((||x-x_0||+||u-u_0||)^n\bigr ).
\label{eq6}
\eeq
In decompositions (\ref{eq3}), (\ref{eq5}) there are derivatives and Frechet differentials
calulated in point  $(x_0,u_0)$ as follows
$$\opAo:=\frac{\partial \opF(x,u)}{\partial x}\biggr|_{x=x_0, u=u_0} \in {\mathcal L}(X\rightarrow E);$$
$$\opAt:=\frac{\partial \opF(x,u)}{\partial u}\biggr|_{x=x_0, u=u_0} \in {\mathcal L}(U\rightarrow E);$$
$$\opA_3:=\frac{\partial \opG(x,u)}{\partial x}\biggr|_{x=x_0, u=u_0} \in {\mathcal L}(X\rightarrow U);$$
$$\opA_4:=\frac{\partial \opG(x,u)}{\partial u}\biggr|_{x=x_0, u=u_0} \in {\mathcal L}(U\rightarrow U);$$
$$d^k \left( \opG(x_0,u_0); (x-x_0, u-u_0)\right) = $$ $$= \sum\limits_{i+j=k} C_k^i \frac{\partial^k \opG(x,u)}{\partial x^i \partial u^j} \biggr|_{x=x_0, u=u_0}(x-x_0)^i (u-u_0)^j;$$
$$\frac{\partial^k \opG(x,u)}{\partial x^i \partial u^j}:\, \underbrace{X\dotplus\cdots \dotplus X}_{i\, \text{times}} \dotplus \underbrace{U\dotplus \cdots \dotplus U}_{j\, \text{times}} \rightarrow U.$$
Obviously, the nonlinear operator equation (\ref{eq2}) should enjoy the real  solution
in order for existence of solution of system (\ref{eq1})--(\ref{eq2}).

Therefore, in this work it is assumed that elements $x_0,\, u_0$  are from
real Banach spaces $X$ and $U$ satisfy operator equations $\opF(x,u)=0,$
$\opG(x,u)=0.$ Therefore, $x_0, u_0$ is stationary solution to system 
(\ref{eq1}), (\ref{eq2}) (steady state, equilibrium point or rest point).

In various power engineering problems (here readers may refer e.g. to \cite{vorop, lit00,bel, lit0}), there the special cases of  system
(\ref{eq1})--(\ref{eq2}) were considered, when $X=E={\mathbb R}^n, \, U={\mathbb R}^m$ are finite spaces. 
In works \cite{lit00,bel,lit0} such systems are known as algebraic-differential systems
and considered with initial Cauchy conditions
\beq
x(0)=\Delta,
\label{eq7}
\eeq
 where $\Delta$  is element from neighborhood of equilibrium point $x_0.$  
Solutions $x(t), \, u(t)$
 on semiaxis $[0, +\infty)$  are constructed  such as $$\lim\limits_{t\rightarrow +\infty} (||x(t)||+||u(t)||)=0.$$
Functions $u(t)$ are selected such as solution $x(t), \, u(t)$ 
is stabilized to the rest point $x_0, u_0$ as  $t\rightarrow +\infty.$

Such problem is important for solution of various automatic 
control problem. In classic works of Russian mathematicians  (Anatoliy Lure, Evgenii Barbashin 
\cite{lit16} et al.) 
the fundamentals of the modern theory for automatic control are considered. 
In number of works (see e.g. \cite{lit0,lit00,bel, joh})
in this field the interesting results of numerical analysis of the electric engineering models are considered. The complexity of this problem is demonstrated, which is caused by solutions' bifurcation and blow-up, ref. \cite{lit0, lit00}.
Therefore, it is important to consider the solvability  of the Cauchy problem  (\ref{eq1}), (\ref{eq2}), (\ref{eq7}) as well as numerical methods to attack this challenging problem both from theoretical and applied sides.


This problem is also important for nonlinear dynamic 
systems' mathematical modeling  using   ``input'' -- ``output''  approach \cite{nonlin, lit7}, when
$u$ is ``input'', and   $y=h(x,u)$ is ``output''.
In case of closed loop the output must satisfy the given criteria in the  form of equation $\opG(y)=0.$
(see Fig. 1)

\begin{figure}
	\centering
		\includegraphics[scale=.41]{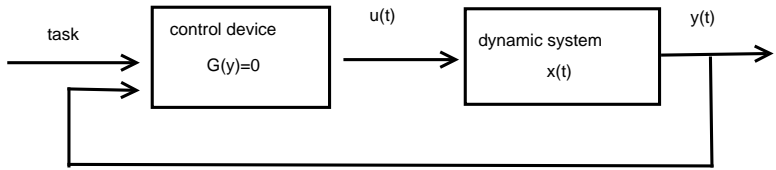}
	\caption{Fig. 1. Scheme of dynamic system with closed loop \cite{bel}.}
	\label{fig:fig1}
\end{figure}

In works \cite{lit00,bel}, \cite{lit0}  only models  (\ref{eq1}), (\ref{eq2}) 
with ordinary differential equations have been  considered. Calculations were performed to demonstrate the 
problem complexity caused by stability analysis and possible blow-up of the solution.

Results of the present work were announced in  \cite{isu1} in Russian language and concentrates on  the nonlinear differential-operator systems in general settings. Constructed theory enables the unified consideration of  ``input-output'' models involving both 
differential and integral-differential equations.

Let  $(0,0)$ be equilibrium point of system (\ref{eq1}), (\ref{eq2})
i.e. in  (\ref{eq3}) -- (\ref{eq6}) and below  $x_0=0,\, u_0=0,$
$\opF(0,0)=0,$ $\opG(0,0)=0.$

\begin{definition}
We call the ball $||x||\leq r$ as
{\it basin of attraction} of equilibrium point $(0,0)$ of system (\ref{eq1}), (\ref{eq2})
such as for arbitrary  $\Delta$ from this ball
there exist solutions $x: [0,+\infty) \rightarrow X,$ $u: [0,+\infty) \rightarrow U$
with initial condition $x(0)=\Delta$
stabilizing to zero on the positive semiaxis.
\end{definition}

\begin{definition}
If basin of attraction of the rest point is nonempty  $(r>0),$  then stationary solution $(0,0)$  of system (\ref{eq1}), (\ref{eq2}) is called {\it asymptotically stable.}
\end{definition}

The {\it objective} of this work is to construct the sufficient condition of non-emptiness of the
basin of attraction of equilibrium points, 
proof of the existence and uniqueness theorem for the solution,
  the construction of sufficient conditions for the nonempty basin of attraction of the equilibrium point, the proof of the existence and uniqueness theorem for the solution
(\ref{eq1}), (\ref{eq2}) with the initial condition $ x(0) = \Delta; $ the development of the method of successive approximations of the solution of the Cauchy problem on semiaxis $ [0, + \infty). $
In addition, for the first time sufficient conditions are formulated for the Cauchy problem's solution branching
 for the system (\ref {eq1}), (\ref {eq2}) with stability analysis of individual branches
of this solution.

The article is organized as follows. The main part of this work (sec.~1-4) concentrated on the case of linear operators $\opA, \opA_4$ have bounded inverse operators. In sec. 1-3 there spectrum of linear bounded operator 
acting from $X$ to $X$
\beq
\opM = \opA^{-1} (\opA_1 - \opA_2 \opA_4^{-1} \opA_3),
\label{eq7o}
\eeq    
will be used.
In sec.~3 it is assumed that $Re\, \lambda \leq -l <0$ for $\lambda \in \sigma (\opM).$
The existence and uniqueness Theorem 1 on the semiaxis $[0,\infty)$ is proved, 
 the asymptotic stability of the Cauchy problem for sufficiently small norm $||x(0)||.$
In sec.~4 the scheme of successive approximations of the desired solution is given.
In sec.~5 there classes of systems  (\ref{eq1}), (\ref{eq2}) are considered under
condition of  $\opA_4 =0,$  i.e.  $\frac{\partial G(x,u)}{\partial u}|_{x=u=0} = 0.$
It is shown that in this case branching of the solution of the Cauchy problem can occur when there exist several solutions $ x(t), \, u(t) $ with  condition $ x(0) = \Delta$ in sec. 6.
  Some of the branches extend to the whole semi-axis $ [0, +\infty) $
and they stabilize to zero as $ t \rightarrow + \infty, $
and others can collapse (go to infinity). Illustrative examples are given.
Generalization on the non-autonomous dynamical systems concludes the article in sec. 7.

\section{Reduction of a non-linear system in the neighborhood of an\\ equilibrium point to a single differential equation}


Let  $\opG(x,u) = \opA_3 x +\opA_4 u + \opr(x,u),$
where $||\opr(x,u)|| = o(||x||+||u||).$

\begin{lemma}
Let operator $\opA_4$ has bounded inverse, here $\opA_4 = \frac{\partial \opG(x,u)}{\partial u}|_{x=u=0}$
is Frechet derivative. Then for arbitrary ball
$ S_1 : ||x|| \leq r_1$ exists ball $S_2: ||u||\leq r_2$ such as for any $x \in S_1$
equation (\ref{eq2}) has unique continuous solution $u(x)$ in ball $S_2.$
In this case we have an asymptotic representation of the solution as $||x|| \rightarrow 0:$
\beq
u = -\opA_4^{-1} \opA_3 x + o(||x||),
\label{eq8}
\eeq
where $\opA_3 = \frac{\partial \opG(x,u)}{\partial x}\biggr|_{x=u=0}$ is Frechet derivative 
on $x$.
\end{lemma}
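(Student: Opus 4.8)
The natural approach is a direct application of the implicit function theorem in Banach spaces to the operator equation $\opG(x,u)=0$ near the equilibrium $(0,0)$. First I would record the data that makes this legitimate: $\opG$ is continuous on the neighborhood $\|x\|\le r_1$, $\|u\|\le r_2$; $\opG(0,0)=0$; and the partial Frechet derivative $\partial\opG/\partial u$ at $(0,0)$ equals $\opA_4$, which by hypothesis is a bounded linear operator with bounded inverse on $U$. The implicit function theorem then yields, after shrinking $r_1$ if necessary, a ball $S_2:\|u\|\le r_2$ and a unique continuous map $u=u(x)$ with $u(0)=0$ solving $\opG(x,u(x))=0$ for all $x\in S_1$.

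Since the paper's standing hypotheses give only continuity of $\opG$ (plus existence of the first differentials), I would not assume $C^1$-smoothness of $\opG$ but instead prove the statement by a contraction-mapping argument, which also makes the asymptotic representation transparent. Using $\opG(x,u)=\opA_3 x+\opA_4 u+\opr(x,u)$ with $\|\opr(x,u)\|=o(\|x\|+\|u\|)$, rewrite $\opG(x,u)=0$ as the fixed-point equation
\beq
u=\opPhi_x(u):=-\opA_4^{-1}\bigl(\opA_3 x+\opr(x,u)\bigr).
\label{eq:fix}
\eeq
The key step is to show that for $r_1$, $r_2$ small enough and $\|x\|\le r_1$, the map $\opPhi_x$ sends $S_2$ into itself and is a contraction there. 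For the self-mapping property one uses $\|\opr(x,u)\|\le\varepsilon(\|x\|+\|u\|)$ on the small neighborhood, so $\|\opPhi_x(u)\|\le\|\opA_4^{-1}\|\,(\|\opA_3\|\,\|x\|+\varepsilon(\|x\|+r_2))$, and choosing $r_1$ proportional to $r_2$ with $\varepsilon$ small makes this $\le r_2$.

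The main obstacle is that the bare $o(\cdot)$ bound on $\opr$ does not by itself give a Lipschitz estimate $\|\opr(x,u)-\opr(x,u')\|\le\eta\|u-u'\|$ needed for the contraction; one must either impose/extract such a modulus-of-continuity bound from the differentiability of $\opG$ in $u$ (writing $\opr(x,u)-\opr(x,u')$ as an integral of $\partial\opr/\partial u$ along the segment, which is $o(1)$ near the origin), or invoke the Banach-space implicit function theorem directly under the tacit $C^1$ assumption. I would take the latter route for brevity: the IFT gives uniqueness and continuity of $u(x)$ on $S_1$ at once. Finally, for the asymptotic formula, substitute the solution back into \eqref{eq:fix}: since $u(x)\to0$ as $\|x\|\to0$ and $\|u(x)\|=O(\|x\|)$ (from the self-mapping bound), we get $\|\opr(x,u(x))\|=o(\|x\|+\|u(x)\|)=o(\|x\|)$, hence $u(x)=-\opA_4^{-1}\opA_3 x-\opA_4^{-1}\opr(x,u(x))=-\opA_4^{-1}\opA_3 x+o(\|x\|)$, which is \eqref{eq8}. $\Box$
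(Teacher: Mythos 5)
Your proposal follows essentially the same route as the paper: the paper also rewrites $\opG(x,u)=0$ as the fixed-point equation $u=-\opA_4^{-1}\opA_3 x-\opA_4^{-1}\opr(x,u)$, shows the map is a uniform contraction sending a ball $\|u\|\le r_2$ into itself for $\|x\|$ small (shrinking $r_1$ to some $r$), applies successive approximations starting from $u_0=0$, and reads off the asymptotics $u(x)\sim-\opA_4^{-1}\opA_3 x$. The Lipschitz/contraction estimate for $\opr$ in $u$ that you flag as the delicate point is simply asserted in the paper ("for every $q\in(0,1)$ there exists a neighborhood in which $\opP$ is uniformly contracting"), so your explicit acknowledgment of how to obtain it is, if anything, more careful than the original.
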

\begin{proof}
Equation (\ref{eq2}) due to invertability of operator  $\opA_4$  and equality $\opG(0,0)=0$
can be reduced to  $u=\opP(u,x).$ Here $\opP(u,x) = - \opA_4^{-1} \opA_3 x - A_4^{-1} r(x,u).$
Hence for $\forall q \in (0,1)$ exists neigborhood  $||u||\leq r_2, $
in which operator  $\opP$ will be uniformly contracting on $x$ from the ball $||x||\leq r_1$: 
$||\opP(u_1,x) - \opP(u_2,x)|| \leq q||u_1-u_2||.$
Moreover, in this neigborhood the following inequality is valid
$||\opP(u,x)||  \leq q r_2 + ||\opP(0, x)||.$
Since $\opP(0,0) =0,$ then due to continuity  $\opP(0, x)$
there exists $r$ in the interval $(0,r_1]$ such as  $||\opP(0,x)|| \leq (1-q)r_2$
for $||x|| \leq r.$ Hereby, for any  $x$ from ball $||x||\leq r $
and $\forall u$ from
 $||u||\leq r_2$
operator $\opP$ will be contracting and transfers ball $S(0,r_2)$ into itself.
Hence, using the principle of contracting mappings, the sequence
$\{ u_n\},$ where $u_n = \opP(u_{n-1} x,)$ $x \in S(0,r),$ $u_0=0$ 
converges uniformly on $x$  to unique solution $u(x)$ in the ball $||u||\leq r_2.$ 
Since  $u_n(x) \sim -\opA_4^{-1} \opA_3 x$ as $||x|| \rightarrow 0,$ 
then we have asymptotics
\beq
u(x) \sim -\opA_4^{-1} \opA_3 x 
\label{op9}
\eeq
as $||x|| \rightarrow 0.$
\end{proof} 
From Lemma 2.1 it follows 
\begin{lemma}
Exists neigborhood  $||x||\leq r,$ $||u||\leq r_2,$ such as system (\ref{eq1}), (\ref{eq2})
can be uniquely reduced to the following  differential equation\beq
\opA \frac{dx}{dt} = f(x).
\label{eq10}
\eeq
Here mapping  $f:\, S(0,r) \subset X \rightarrow E_2 := \opF(x,u(x)) $
is as follows
\beq
f(x) = \opA_1 x - \opA_2 \opA_4^{-1} \opA_3 x +L(x).
\label{eq11}
\eeq
Nonlinear mapping $L: \, X \rightarrow E$ satisfies the estimate  $||L(x)|| = o(||x||).$
\end{lemma}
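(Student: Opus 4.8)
The plan is to combine Lemma~2.1 with a direct substitution into \eqref{eq1}. By Lemma~2.1 there is a ball $S(0,r)$, with $r\in(0,r_1]$, and a ball $||u||\le r_2$ such that for every $x\in S(0,r)$ equation \eqref{eq2} has a unique continuous solution $u=u(x)$ in $||u||\le r_2$. First I would use this to turn \eqref{eq1}--\eqref{eq2} into the single equation: along any solution pair $(x(t),u(t))$ that stays in the neighbourhood $||x||\le r$, $||u||\le r_2$, uniqueness of $u(x)$ forces $u(t)=u(x(t))$, and then \eqref{eq1} becomes $\opA\,dx/dt=\opF(x,u(x))=:f(x)$, i.e.\ \eqref{eq10}. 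Conversely any solution of \eqref{eq10} together with $u(t):=u(x(t))$ solves the original system; so the reduction is an equivalence, which is what ``uniquely reduced'' should mean here.

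Next I would extract the linear part of $f$. Plug the asymptotic representation \eqref{eq8}, $u(x)=-\opA_4^{-1}\opA_3 x+o(||x||)$, into the expansion \eqref{eq3} of $\opF$ (with $x_0=u_0=0$, $\opF(0,0)=0$):
\beq
f(x)=\opF(x,u(x))=\opA_1 x+\opA_2 u(x)+\opR(x,u(x)).
\label{eq-fexpand}
\eeq
Substituting \eqref{eq8} gives $\opA_2 u(x)=-\opA_2\opA_4^{-1}\opA_3 x+\opA_2\,o(||x||)$, and since $\opA_2\in{\mathcal L}(U\to E)$ is bounded, the term $\opA_2\,o(||x||)$ is again $o(||x||)$. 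Hence
\beq
f(x)=\opA_1 x-\opA_2\opA_4^{-1}\opA_3 x+L(x),\qquad L(x):=\opA_2\bigl(u(x)+\opA_4^{-1}\opA_3 x\bigr)+\opR(x,u(x)),
\label{eq-Ldef}
\eeq
which is exactly \eqref{eq11}. It remains to check $||L(x)||=o(||x||)$: the first summand is $o(||x||)$ by \eqref{eq8} and boundedness of $\opA_2$; for the second, \eqref{eq4} gives $||\opR(x,u(x))||=o(||x||+||u(x)||)$, and $||u(x)||=O(||x||)$ by \eqref{eq8}, so $||x||+||u(x)||=O(||x||)$ and therefore $||\opR(x,u(x))||=o(||x||)$. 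Adding the two estimates yields $||L(x)||=o(||x||)$, and the continuity of $f$ on $S(0,r)$ follows from continuity of $\opF$ and of $x\mapsto u(x)$.

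I do not expect a serious obstacle here; the statement is essentially a bookkeeping corollary of Lemma~2.1. The one point that needs a little care is the logical content of ``uniquely reduced'': one must argue both directions of the equivalence and, in the forward direction, note that a priori a solution of the system only satisfies $\opG(x(t),u(t))=0$, so one needs $u(t)$ to stay in the ball $||u||\le r_2$ where Lemma~2.1 guarantees uniqueness — this is automatic as long as $x(t)$ stays in $S(0,r)$, which is the standing assumption when we work ``in a neighbourhood of the equilibrium point''. The only other mildly delicate step is confirming that composing an $o(\cdot)$ estimate with the bounded maps $\opA_2$, $\opA_4^{-1}\opA_3$ preserves the Landau order, which is immediate from operator-norm bounds. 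Everything else is substitution of \eqref{eq8} into \eqref{eq3}.
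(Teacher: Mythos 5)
Your proposal is correct and follows exactly the route the paper intends: its one-line proof ("substitute $u(x)$ from (\ref{eq2}) into the expansion (\ref{eq3}) of $\opF$") is precisely the substitution you carry out, and your identification of $L(x)=\opA_2\bigl(u(x)+\opA_4^{-1}\opA_3 x\bigr)+\opR(x,u(x))$ together with the $o(\|x\|)$ bookkeeping is the detail the paper leaves implicit. No gaps.
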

\begin{proof}
For proof it is sufficient to substitute $u(x),$ defined from (\ref{eq2}), into  (\ref{eq3}) of mapping $\opF(x,u).$
\end{proof}

\begin{remark}
After determination of $x(t)$ one may find the approximate $u(t).$ 
Indeed, let $x(t)$ is solution to differential equation (\ref{eq10}) constructed in Lemma 2.
Then function $u(t)$ using Lemma 1 is constructed by following asymptotic $u(t) \sim -\opA_4^{-1}\opA_3 x(t)$ as   $||x|| \rightarrow 0.$
\end{remark}

\section{An a priori estimate of  the Cauchy problem's solution}

Let us consider the system  (\ref{eq1}), (\ref{eq2}) with initial Cauchy condition
  $x(0)=\Delta.$

\begin{lemma}
Let $x:[0,+\infty) \rightarrow X$ be solution to Cauchy problem for 
system   (\ref{eq1}), (\ref{eq2}), where $||\Delta||$ in the initial condition  is sufficiently small. 
Let $Re \, \lambda \leq -l <0 $ for all $\lambda \in \sigma(\opM).$ Then there are $ C\geq 1$ and $\varepsilon \in (0,l),$ such as $||\exp \opM t||_{\mathcal{L}(X\rightarrow X)} \leq C e^{-lt}$
and $||x(t)||_X \leq C ||\Delta||e^{(\varepsilon -l)t}$
for  $t \in [0,+\infty).$
\end{lemma}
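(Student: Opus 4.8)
The plan is to use Lemma~2.2 to pass from the coupled system to the single reduced equation $\opA\,\frac{dx}{dt}=f(x)$, to rewrite the latter as a perturbation $\frac{dx}{dt}=\opM x+\opA^{-1}L(x)$ of the linear autonomous problem generated by $\opM$, and then to obtain the a~priori estimate from the variation-of-constants formula by a Gronwall inequality combined with a continuation (bootstrap) argument that keeps the orbit inside the ball on which the reduction is valid.

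First I would record the decay of the linear flow. Since $\opM\in\mathcal L(X\to X)$ is a bounded operator, $t\mapsto e^{\opM t}$ is a norm-continuous group, every $e^{\opM t}$ is invertible, and the spectral mapping theorem gives $\sigma(e^{\opM t})=e^{t\sigma(\opM)}$; hence the spectral radius of $e^{\opM t}$ does not exceed $e^{-lt}$, and by Gelfand's formula $\lim_{n\to\infty}\|e^{n\opM}\|^{1/n}\le e^{-l}$. Splitting $t=n+s$ with $s\in[0,1)$ and using $\sup_{s\in[0,1]}\|e^{s\opM}\|<\infty$, one obtains a constant $C\ge1$ with $\|e^{\opM t}\|_{\mathcal L(X\to X)}\le Ce^{-lt}$ for all $t\ge0$. (Strictly, this argument yields the exponent $-l+\delta$ for an arbitrarily small $\delta>0$; to reach the exponent $-l$ one either shrinks $l$ slightly so that $\sigma(\opM)$ lies in $\{Re\,\lambda<-l\}$, or invokes that for norm-continuous semigroups the growth bound coincides with the spectral bound. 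The small loss is harmless, as it is reabsorbed below.)

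Next I would localize the nonlinearity and run Gronwall. As long as $\|x(t)\|\le r$, Lemma~2.2 gives $\opA\,\frac{dx}{dt}=f(x)$ with $f(x)=\opA_1x-\opA_2\opA_4^{-1}\opA_3x+L(x)$ and $\|L(x)\|=o(\|x\|)$; applying $\opA^{-1}$ yields $\frac{dx}{dt}=\opM x+\opA^{-1}L(x)$. Since $\opA^{-1}$ is bounded, for each $\eta>0$ there is $\rho\in(0,r]$ with $\|\opA^{-1}L(x)\|\le\eta\|x\|$ whenever $\|x\|\le\rho$; I fix $\eta$ so small that $\varepsilon:=C\eta\in(0,l)$, and I take $\|\Delta\|$ so small that $C\|\Delta\|<\rho$. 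On any subinterval $J\ni 0$ of $[0,+\infty)$ on which $\|x(t)\|\le\rho$, Duhamel's formula $x(t)=e^{\opM t}\Delta+\int_0^t e^{\opM(t-s)}\opA^{-1}L(x(s))\,ds$ together with the two bounds above gives
\[
e^{lt}\|x(t)\|\le C\|\Delta\|+C\eta\int_0^t e^{ls}\|x(s)\|\,ds ,
\]
so Gronwall's inequality yields $\|x(t)\|\le C\|\Delta\|e^{(\varepsilon-l)t}$ on $J$. Because $\varepsilon<l$ and $C\|\Delta\|<\rho$, this forces $\|x(t)\|<\rho$ throughout $J$; hence the maximal such $J$ cannot have a finite right endpoint and must equal $[0,+\infty)$, which proves $\|x(t)\|_X\le C\|\Delta\|e^{(\varepsilon-l)t}$ for all $t\ge0$.

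The routine part is the Gronwall estimate itself; the two points that need care are (i) deducing the uniform operator-norm decay $\|e^{\opM t}\|\le Ce^{-lt}$ from the merely pointwise spectral condition $Re\,\lambda\le-l$, which in a general Banach space is exactly where the spectral mapping theorem for bounded operators and Gelfand's formula enter, and (ii) the bootstrap step verifying that the trajectory never leaves the ball $\|x\|\le\rho$ on which $\opA^{-1}L(x)$ is dominated by $\eta\|x\|$ and on which Lemma~2.2 applies — without it the Gronwall comparison would not be legitimate over all of $[0,+\infty)$.
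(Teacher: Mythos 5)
Your proof follows essentially the same route as the paper's: reduce to the single equation $\frac{dx}{dt}=\opM x+\opA^{-1}L(x)$ via Lemma 2.2, write the Duhamel/Volterra representation, bound the nonlinearity by $\eta\|x\|$ on a small ball, and close with Gronwall plus a continuation argument keeping the orbit in that ball. The only differences are that you supply the spectral-mapping/Gelfand derivation of $\|e^{\opM t}\|\le Ce^{-lt}$ (which the paper simply cites from Daleckii--Krein) and make the bootstrap step explicit; both are correct refinements, not a different approach.
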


\begin{proof}
 Using Lemma 2.1 let us find function $u(t) $ using successive approximations.
Substitution of this function in the right hand side of eq. (\ref{eq1}) leads to eq. 
(\ref{eq10}). Using identity  $\opA^{-1} f(x) = \opM x +\opA^{-1} L(x)$ 
we get
\beq
\frac{dx}{dt} = \opM x +\opA^{-1}L(x),\,\,\, x(0)=\Delta.
\label{eq12}
\eeq 
 Therefore 
\beq
x(t) = \exp \opM t \Delta +\int_0^t \exp \opM(t-s) \opA^{-1} L(x(s))\, ds =: \opP(x,t)
\label{eq13}
\eeq
 is Volterra integral equation for determination of the desired solution $x(t)$ of Cauchy 
problem. It is known (see e.g., \cite{lit4}), that due to the conditions of  Lemma 3.1 the operator 
exponential on the spectrum $\sigma(\opM)$
satisfies the following estimate $$||\exp \opM t||_{{\mathcal L}(X \rightarrow X)} \leq C e^{-lt},\,\,\, C\geq 1.$$
Therefore, we have the inequality 
$$||x(t)||\leq C e^{-lt} ||\Delta|| +C ||\opA^{-1}|| \int_0^t e^{-l (t-s)} ||L(x(s))||\, ds. $$
For $\forall\varepsilon >0$ due to the estimate $||L(x)|| =o(||x||)$
there exists  $\delta = \delta(\varepsilon)>0$ such as  $||\opA^{-1}|| ||L(x)||\leq \frac{\varepsilon}{C}||x||$ for $||x||\leq \delta.$
Therefore, until  $||x(s)||\leq \delta$ there will be inequality
$e^{lt} ||x(t)|| \leq C||\Delta|| +C \int_0^t e^{ls} \frac{\varepsilon}{C} ||x(s)||\, ds.$
Hence, in view of the Gronwall-Bellman inequality, we have
$$e^{lt} ||x(t)|| \leq C||\Delta||e^{\int_0^t \varepsilon ds}  =C ||\Delta|| e^{\varepsilon t}. $$
 Therefore $||x(t)|| \leq C||\Delta|| e^{(\varepsilon - l)t}$ for $t>0.$
Let us select $\varepsilon\in (0,l).$  Then $||x(t)||\leq C||\Delta|| e^{-(l-\varepsilon)t} < C ||\Delta||$  for $t \in (0,+\infty).$ Let in initial condition $x(0)=\Delta$
 sufficiently small  $\Delta$ is selected, i.e. $||\Delta||<\frac{\delta}{C}.$
 Since $C\geq 1$  then  $||x(0)||<\delta.$  Then $||x(t)||<\delta$ for $\forall t \in [0,+\infty)$
 and condition $||x||\leq \delta$ is satisfied.
Moreover, $\sup\limits_{0\leq t <\infty} ||x(t)||<\delta.$ For $\forall t^* \in [0,\infty)$ 
$||x(t^*)||<\delta.$
Thus, Lemma 3.1 is valid, a continuous solution exists and it is unique on semiaxis $ [0, + \infty). $ 
\end{proof}

A priori assessment of the solution justifies the 
continuation of a local continuous solution of the Cauchy problem to the entire interval $ [0, + \infty). $ In the monograph \cite{lit6} the proof of the possibility of continuous continuation
of the solution was also based on the presence of an a priori estimate of the solution. A priori estimate of the solution, as a rule, is used in proving nonlocal existence theorems
by the method of continuation with respect to a parameter, see, for example, sec. 14 in \cite{lit3}.

 \section {Existence, uniqueness, and asymptotic stability}

\begin{theorem}
Let $(0,0)$ be equilibrium point of  system (\ref{eq1}), (\ref{eq2}).
Let $Re \lambda \leq -l <0$ for all $\lambda \in \sigma (\opM),$
 $||\Delta||$ is sufficiently small. Then system (\ref{eq1}), (\ref{eq2})
with condition $x(0) = \Delta$  enjoys unique solution $x: [0,+\infty) \rightarrow X,$
$u: [0, +\infty) \rightarrow U.$ Moreover, $\lim\limits_{t\rightarrow +\infty}(||x(t)||+||u(t)||)=0.$
\end{theorem}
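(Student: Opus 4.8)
The theorem is essentially a packaging of the three preceding lemmas, so the plan is to assemble them in order and then read off the asymptotic behaviour of $u$. First I would invoke Lemma 2.1: since $\opA_4$ has bounded inverse, for the chosen small radius $r_1$ there is a ball $S_2:\|u\|\le r_2$ in which equation (\ref{eq2}) is uniquely solvable for $u=u(x)$, with $u(x)=-\opA_4^{-1}\opA_3 x+o(\|x\|)$ as $\|x\|\to 0$. Then by Lemma 2.2 the system (\ref{eq1}), (\ref{eq2}) reduces, in a neighbourhood $\|x\|\le r,\ \|u\|\le r_2$, to the single differential equation $\opA\,dx/dt=f(x)$ with $f(x)=\opA_1 x-\opA_2\opA_4^{-1}\opA_3 x+L(x)$, $\|L(x)\|=o(\|x\|)$; applying $\opA^{-1}$ gives the Cauchy problem (\ref{eq12}), namely $dx/dt=\opM x+\opA^{-1}L(x)$, $x(0)=\Delta$.

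The heart of the argument is existence, uniqueness and the decay estimate for (\ref{eq12}) on the whole semiaxis, and this is exactly what the a priori estimate of Lemma 3.1 supplies. I would argue as follows: local existence and uniqueness of a continuous solution of the Volterra equation (\ref{eq13}) follows from the contraction mapping principle on a short interval (the kernel $\exp\opM(t-s)$ is bounded and $\opA^{-1}L$ is continuous and, by $\|L(x)\|=o(\|x\|)$, locally Lipschitz-like near $0$ in the sense needed for the iteration); then the a priori bound $\|x(t)\|_X\le C\|\Delta\|e^{(\varepsilon-l)t}$ from Lemma 3.1, valid as long as $\|x(s)\|\le\delta$, shows that the solution cannot leave the ball $\|x\|\le\delta$ provided $\|\Delta\|<\delta/C$, so the local solution extends to all of $[0,+\infty)$ by the standard continuation argument (the a priori estimate prevents blow-up and prevents escape from the neighbourhood where $u(x)$ and $f$ are defined). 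Choosing $\varepsilon\in(0,l)$ gives $\|x(t)\|\le C\|\Delta\|e^{-(l-\varepsilon)t}\to 0$ as $t\to+\infty$.

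Finally I would recover $u(t)$. Set $u(t):=u(x(t))$ with $u(\cdot)$ the solution map from Lemma 2.1; this is continuous on $[0,+\infty)$, takes values in $U$, and by construction $(x(t),u(t))$ solves (\ref{eq1}), (\ref{eq2}) with $x(0)=\Delta$. Uniqueness of the pair follows from uniqueness of $x(t)$ (established above) together with the uniqueness of $u(x)$ in $S_2$ for each $x$ (Lemma 2.1). For the limit, the asymptotic representation (\ref{eq8}) gives $\|u(t)\|\le \|\opA_4^{-1}\opA_3\|\,\|x(t)\|+o(\|x(t)\|)$, so $\|u(t)\|\to 0$ as $\|x(t)\|\to 0$; adding this to the decay of $\|x(t)\|$ yields $\lim_{t\to+\infty}(\|x(t)\|+\|u(t)\|)=0$, which is the assertion.

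**Main obstacle.** The only genuinely non-routine point is the passage from the local solution to the global one: one must be careful that the a priori estimate of Lemma 3.1 is not circular — it assumes a solution exists on $[0,T)$ and bounds it — and combine it correctly with a local existence result so that the maximal interval of existence is forced to be $[0,+\infty)$ and the trajectory stays inside the ball $\|x\|\le\delta$ (equivalently $\|u\|\le r_2$) on which the reduction of Lemma 2.2 is legitimate. Everything else is bookkeeping across Lemmas 2.1, 2.2 and 3.1.
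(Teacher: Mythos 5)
Your proposal is correct and follows essentially the same route as the paper: reduction via Lemmas 2.1 and 2.2 to the single equation (\ref{eq12}), local existence and uniqueness by Picard/contraction, global continuation forced by the a priori estimate of Lemma 3.1 keeping the trajectory strictly inside the ball $\|x\|\leq\delta$, and recovery of $u(t)=u(x(t))$ with decay of both components. Your explicit treatment of the non-circularity of the a priori estimate and of the limit $\|u(t)\|\to 0$ via (\ref{eq8}) is slightly more careful than the paper's own two-paragraph argument, but the substance is identical.
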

\begin{proof}

By virtue of Lemmas 1 and 2 and the obvious validity of Picard's theorem for the equation (\ref{eq12}), the Cauchy problem (\ref{eq1}), (\ref{eq2}), $ x(t_0) = x_0 $ has a unique local solution for $ \forall t_0 \in [0, \infty). $ Therefore, the set of values of the arguments $ t, $ for which the local solution continuously extends, is open in any interval
$ [t_0, + \infty). $

Since the solution of the Cauchy problem  for sufficiently small $ || x(0) || $
 on the basis of Lemma 3, satisfies the a priori estimate $ x(t) <\delta \, $ for $ \forall t \in (0, + \infty) $ and does not reach $ \delta,$ then the set of values of the arguments $ t, $ on which the solution can be continued, will be closed. Therefore, on the basis of known facts about the method of continuation with respect to a parameter (see, for example, \cite{lit3}, sec.14) the solution of the equation (\ref{eq12}) continuously extends to the entire interval $ [0, + \infty). $
In view of Lemmas 1, 2 and 3, the desired functions $ x(t), u(t) $ stabilize as $ t \rightarrow + \infty $ to  equilibrium point $ (0,0). $ 
\end{proof}

Additional information on operator $L(x)$  in equation (\ref{eq13})
enables lower estimation of $||x||,$  formulated in Theorem 1.
Indeed, let positive continuous decreazing function  $q(r)$ is found
$q(0)=0$  such as $$||L(x_1)-L(x_2)||\leq q(r) ||x_1-x_2||$$
for any $x_1, x_2$ from ball $||x||\leq r.$
 Then for operator $\opP$  in equation  (\ref{eq13})  we have an estimate
$$\sup\limits_{0\leq t < \infty} ||\opP(x_1,t) - \opP(x_2,t)|| \leq $$ $$\leq C||\opA^{-1}|| \int_0^t e^{-l(t-s)}  ||L(x_1(s)) - L(x_2(s))||\, ds \leq $$  $$\leq \frac{c}{l} ||A^{-1}|| q(r) \sup\limits_{0\leq s < \infty} ||x_1(s)-x_2(s)||.$$
Let us select $r^*>0$ such as for  $\forall r \leq r^*$  the following inequality
 $$\frac{C}{l} ||A^{-1}|| q(r) \leq q^* <1$$ takes place. Then operator $\opP$ 
will be contructing in the ball $S(0,r^*)$ and for  $||x||\leq r^*$ and $\forall t \in [0,+\infty)$
$$||\opP(x,t)||\leq ||\opP(x,t) - \opP(0,t)|| + ||\opP(0,t)|| \leq q^* r^* +$$
$$+||\exp \opM t \Delta|| \leq q^* r^* +C ||\Delta||.$$
 Let us now select $||\Delta|| \leq \frac{(1-q^*)r^*}{C}.$

For such a sufficiently small initial value $ \Delta $, the contraction operator $ \opP $ maps the ball $ S(0, r^*) $
into itself and hence the Cauchy problem (\ref{eq12}) has a unique global solution.

\begin{example}
$$\left\{\begin{array}{l} {\frac{\partial x(t,z)}{\partial t} = -x(t,z) + x^3(t,z) + u^2(t,z)}, \\ {x(0,z) = \Delta(z) },\,\,\,\,\,\,\, t \in [0, +\infty), \,\, z \in [0,1],\\
{u(t,z) + \int_0^1 zs u(t,s)\, ds + x^2(t,z) + u^2(t,z) = 0,}  \end{array}\right. $$
$|\Delta(z)|\leq \varepsilon,$ $\varepsilon$ is sufficiently small. 
Here $\opA = \opA_1 = I, \opA_3=0,$ $X=E=U={\mathcal C}_{[0,1]}.$
Operator $\opA_4 = I + \int_0^1 zs[\cdot]\, ds$
has  bounded inverse  $$\opA_4^{-1} = I - \frac{2}{3} \int_0^1 zs [\cdot]\, ds,$$
$\opM = -I.$ If  $|x(t,z)|$ is sufficiently small then sequence $\{ u_n(t,z) \},$ where
$$u_n(t,z) = -x^2(t,z) - u_{n-1}^2(t,z) +\frac{2}{3}\int_0^1 zs \{x^2(t,s) +u_{n-1}^2(t,s)\}\, ds,$$
$u_0(t,z)=0$ converges and enables the algorithm for construction of solution $u(t,z)$ 
of integral equation as function of  $x(t,z)$.   
Substituting this solution into a differential equation, we reduce the problem to the following differential equation:
 $$\frac{\partial x(t,z)}{\partial t} = - x(t,z) + {\mathcal O}(||x||^2),\,\,x(0,z) = \Delta(z).$$
Here $||u|| = {\mathcal O} (||x||^2).$

Therefore, this model, consisting of differential and integral equations satisfies 
conditions of the Theorem 1 and on the semiaxis $[0,\infty)$ enjoy unique 
continuous solution  $x(t,z),$  $u(t,z),$ stabilizing as  $t\rightarrow +\infty$ to the rest point 
 $(0,0)$ if $\max\limits_{0\leq z \leq 1} |\Delta(z)|$ is sufficiently small. 
\end{example}

\section{On the construction of a solution of a nonlinear system by the method of successive approximations}

Under the conditions of Theorem 1, the desired solution $ x(t), u(t) $ of the system (\ref{eq1}), (\ref{eq2})
with the condition $ x(0) = \Delta $ can be constructed without prior  system's reduction to
one differential equation. Indeed, we introduce two sequences
$ \{x_n (t) \}, \{ u_n(t)\}$ with conditions $x_n(0)=\Delta, \, n=0,1,\dots$,
where $||\Delta||$ is sufficiently small. Let 
$u_0=0,$  and $||\Delta||$ is sufficiently small,
$x_n(t)$ is solution to Cauchy problem 
$\opA \frac{d x_n}{d t} = \opF(x_n, u_{n-1}), \, x_n(0)=\Delta, n=1,2,\dots.$
Obviusly solution $x_n(t)$ exists and unique for $t\geq 0$ due to the Theorem 1.

Next, let us construct functions $u_n$ using the iternations $u_n=u_{n-1}+w_n,$  where $u_0=0.$ 
Due to invertability of the operator $\opA_4,$ functions
$w_n$
can be found from the linear equation
$\opA_4 w_n +\opG(x_n, u_{n-1}) = 0, n=1,2,\dots.$
Then, under Theorem 1 assumptions $\lim\limits_{n\rightarrow \infty} x_n(t) = x(t),$
$\lim\limits_{n\rightarrow \infty} u_n(t) = u(t),$  $\lim\limits_{t\rightarrow \infty} (||x(t)||+||u(t)||)=0.$
It is essential to require small $||\Delta||$ otherwise solution to 
nonlinear differential equation may blow-up in the point 
 $t^*$ (refer to examples below).

\begin{example}
Let us consider the system 
$$\left\{\begin{array}{l} {\frac{dx(t)}{dt}  = -\frac{x(t)}{2} - u(t)+x^2(t)}, \\ 
{ 0 = 2 u(t) -x(t) +2 u(t) \sin u(t) -x(t) \sin u(t).  } \\ \end{array}\right. $$
with initial condition $x(0)=\Delta,$ $0\leq t < +\infty.$
The replacement $u(t) = \frac{x(t)}{2}$ will reduce this system to Cauchy problem
$$\left\{\begin{array}{l} {\dot{x}(t) = -x(t)+x^2(t)}, \\ {x(0)=\Delta}. \\ \end{array}\right. $$
It is easy to verify that the latter model has an exact solution
 $$x(t) = \frac{\Delta}{e^t (1-\Delta)+\Delta}.$$
Let us demonstrate that point $t^* = \ln \frac{\Delta}{\Delta - 1}$
may appear to be blow-up of the constructed solution.
We consider the following 4 cases.

{\it Case 1.}
If $\Delta \in (0,1),$ then blow-up point $t^*$ is complex, solution is continuous for $t \in (0,+\infty)$ and stabilizing to the rest point  $x=0$ as $t \rightarrow +\infty$ (ref. Fig. 2).

\begin{figure}
	\centering
		\includegraphics[scale=.31]{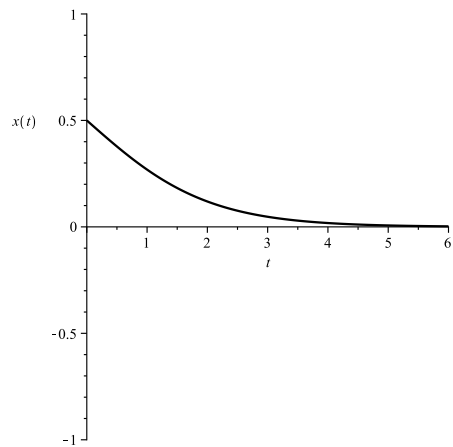}
	\caption{Fig. 2. $\Delta \in (0,1).$}
	\label{fig:fig1}
\end{figure}

{\it Case 2.}
If $\Delta \in (-\infty, 0),$  then blow-up point is negative, and on  semiaxis $[0,+\infty)$  solution is continuous and stabilizing to the rest point  $x=0$  as $t\rightarrow +\infty$ (ref. Fig. 3).

\begin{figure}
	\centering
		\includegraphics[scale=.31]{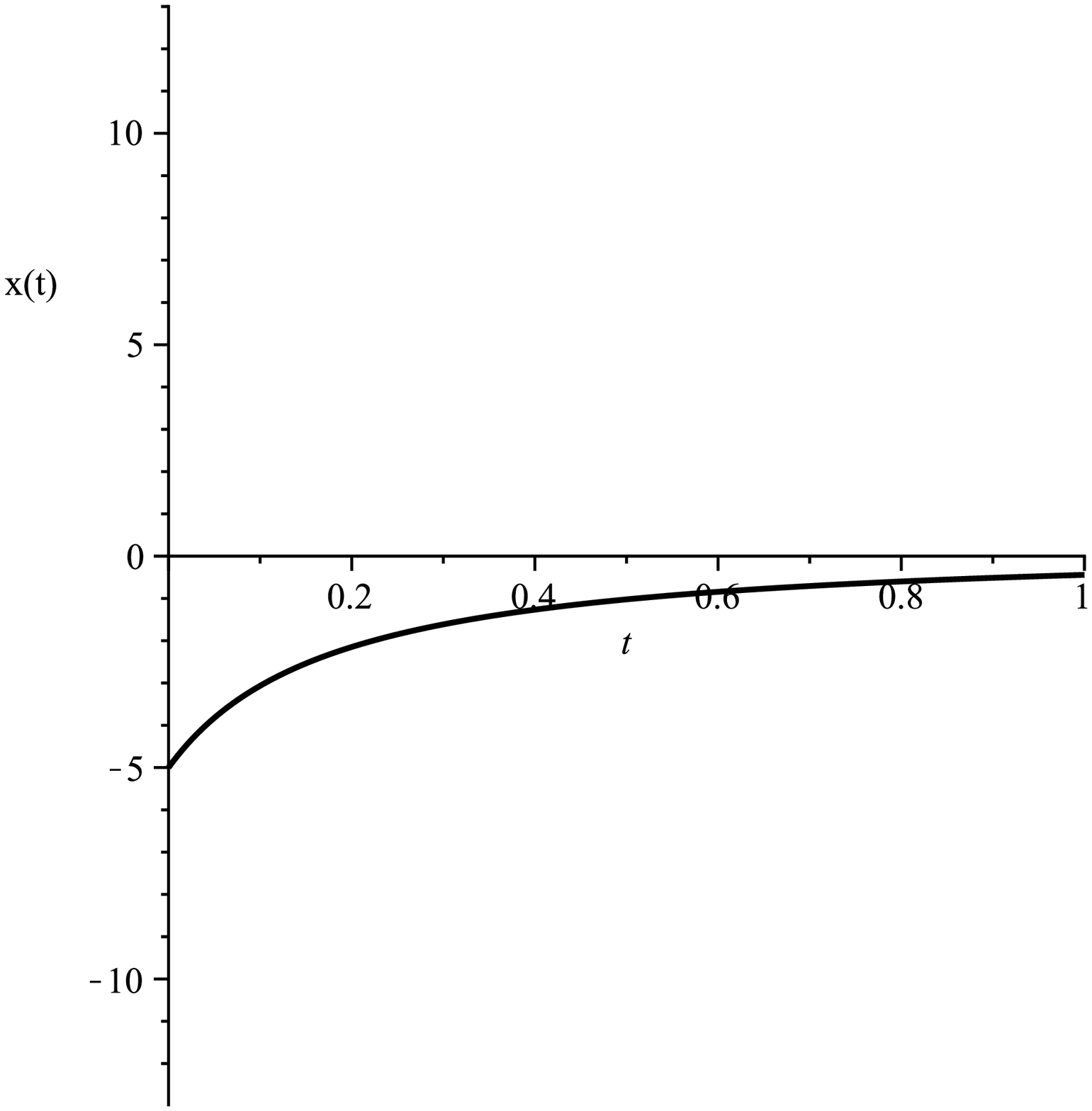}
	\caption{Fig. 3.  $\Delta  < 0.$}
	\label{fig:fig2}
\end{figure}

{\it Case 3.}
If $1< \Delta < \infty,$  then solution  blows-up for $t^* = \ln \frac{\Delta}{\Delta-1},$
where $\frac{\Delta}{\Delta-1}>0.$ If $t>t^*$ then solution is continuous and also 
stabilizing to the rest point $x=0$ as $t\rightarrow +\infty $ (ref. Fig. 4).

{\it Case 4.} For  $\Delta=0$ and $\Delta=1$ we get the stationary solutions.

\begin{figure}
	\centering
		\includegraphics[scale=.32]{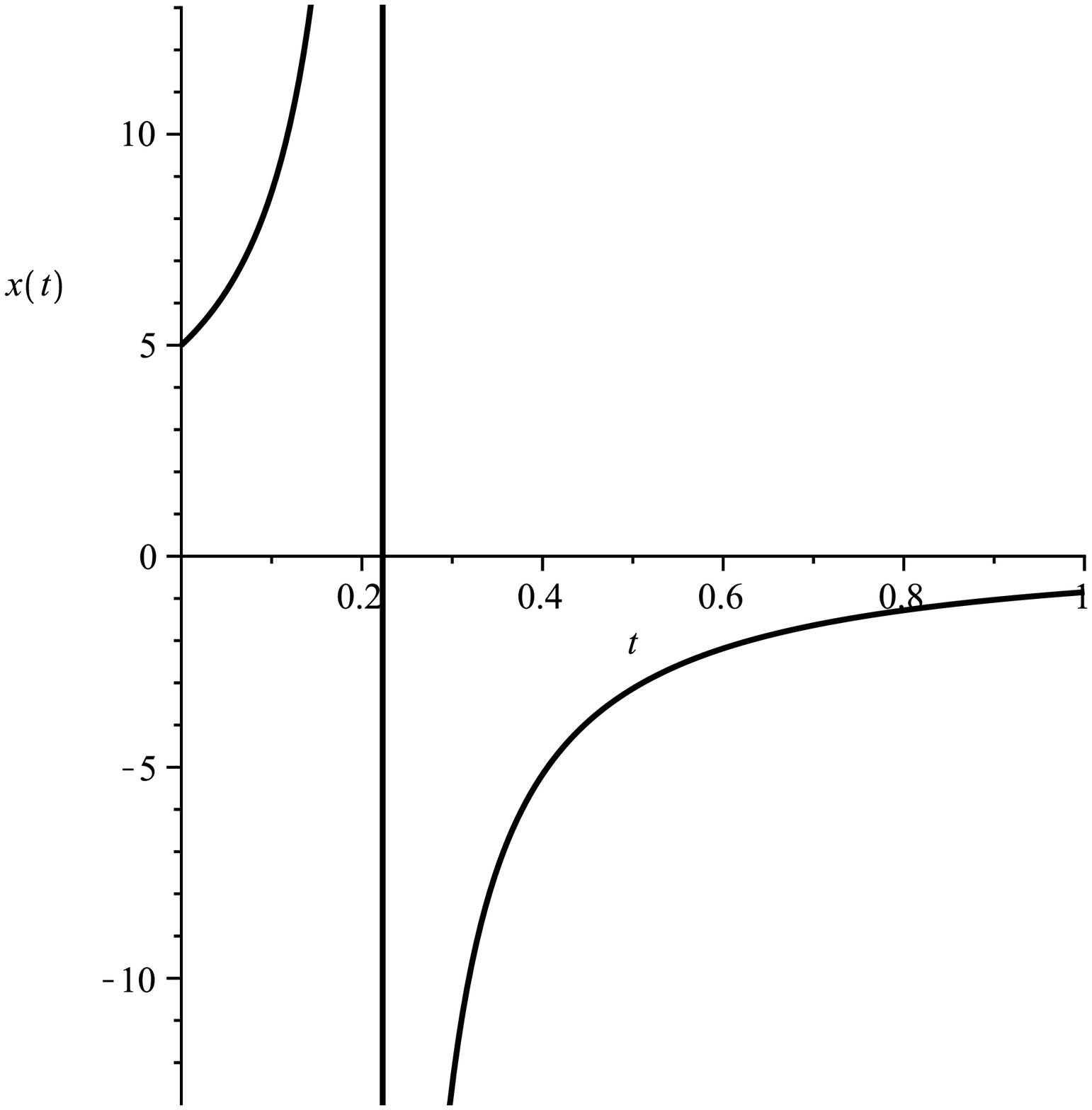}
	\caption{Fig. 4. $\Delta > 1.$}
	\label{fig:fig1}
\end{figure}

\end{example}

{Based on the above, the following conclusion can be drawn.} In example 2 for $\Delta \in (-\infty,1)$ exists the unique solution to the Cauchy problem as $t\geq 0,$ this solution
stabiliziing to the rest point as $t\rightarrow +\infty.$
It is to be mentioned that for  $\Delta >1$  the Cauchy problem's solution will blow-up
on the finite time  $\ln \frac{\Delta}{\Delta -1}.$

\begin{remark}
The absence of real rest points can generate solutions with a countable set of blow-up points.
\end{remark}

%

\section{Branching of the Cauchy problem's solution}

Let  in (\ref{eq1}), (\ref{eq2}) $E=X={\mathbb R}^n,$ $U={\mathbb R}^m,$
$\opA$ is unit $n\times n$ matrix.
The broad class of the systems  (\ref{eq14}), (\ref{eq15}) appear in various applications
\beq
\left\{\begin{array}{l} {\frac{dx_i}{dt} = \sum\limits_{k=1}^n a_{ik} x_k + \sum\limits_{k=1}^m b_{ik} u_k +R_i(x; u),} \\ {x_i(0)=\Delta_i, i=1,\dots, n,} \\
 \end{array}\right. 
\label{eq14}
\eeq

\beq
{0 = q_k(x; u), \, k=1, \dots, m},
\label{eq15}
\eeq
where $$x:=(x_1, \dots, x_n)^T, \,\, u:=(u_1, \dots, u_m)^T, \,\, m\leq n.$$
Let $R_i(x,u) = o(||x|| +||u||),$
$$q_k(x,u)=\opA_k(x_k,u_k)+x_k^{N_k+1} r_k(x,u), \, k=1,\cdots, m, $$
 $$\opA_k(x_k,u_k) := \sum_{s=0}^{N_k} m_{ks} x_k^{N_k-s} u_k^s +o((|x_k|+|u_k|)^{N_k}),$$
$r_k(x,u) = o(1)$ ïðè $||x||+||u||\rightarrow 0,\, N_k\geq 2.$

We will search for small solutions of (\ref{eq15})
$u=u(x) \rightarrow 0$ as $||x||\rightarrow 0$
in the form of products $u_i=x_i w_i(x), i=1,\dots,m$  under condition
 $$\sum\limits_{i=1}^m |w_i(0) |\neq 0.$$
Then functions  $w_i$ must satisfies the following system
$$x_k^{N_k} \sum_{s=0}^{N_k} m_{ks} w_k^s(x_k) + x_k^{N_k+1} r_k(x, x_1 w_1, \dots, x_m w_m) =0,$$ $k=1,\dots,m.$
After reduction, we come to the system
$$\sum_{s=0}^{N_k} m_{ks} w_k^s(x_k) + x_k {r}_k(x,x_1 w_1,\dots , x_m w_m)=0, \, k=1,\dots, m.$$
 Therefore, the vector $w(0)=(w_1(0),\dots,w_m(0))^T$ 
consists of the polynomials roots 
$$\sum_{s=0}^{N_k} m_{ks} w_k^s(0)=0,\, k=1,\dots, m.$$

Let  $w_k^*,$ $k=1,\dots, m$ be simple roots of the corresponding 
polynomials.
On the basis of the implicit function theorem, these roots will have a small solution of
systems (\ref{eq15}) of the form
$$u_k(x) = x_k w_k^* +r_k(x),\, k=1,\dots,m, $$
where $|r_k(x)| = o(||x||).$
Functions $r_k(x)$ for small $x$ can be approximated using successive 
approximations. Substitution of  $u_k(x)$ into differential equations  $(\ref{eq14})$
yields (similar with proof of the Lemma 2) the differential system with respect 
to vector-function $x(t):$
$$\frac{dx_i}{dt}  = \sum_{k=0}^n c_{ik}x_k + o(||x||)$$
with conditions $x_i(0)=\Delta_i,\, i=1,\dots,n.$
Here 
$$c_{ik} = \left\{\begin{array}{l} {a_{ik}, i=1,\cdots,n, \, k=m+1,\dots, n}, \\ {a_{ik}+b_{ik} w_k^*, i=1,\cdots, n, \, k=1,\cdots, m.} \\ \end{array}\right.$$
Let us introduce the matrix $M=\{c_{ik}\}_{i,k=1}^n.$
Finally, we make\\

\noindent {\bf Remark.}
Let $\sum_{k=1}^n|\Delta_k|$ be sufficiently small.
If all the eigenvalues of the matrix $M$
has negative real parts, then exists solution to problem (\ref{eq14}), (\ref{eq15}),
this solution is unique and stabilizing to rest point $(0,0)$
as $t\rightarrow +\infty.$
Since polynomials $\sum_{s=0}^{N_k} m_{ks} w_k^s,$ $k=1,\dots,m$
can have several simple roots such as the corresponding matrix 
$M$ has only eigenvalues with negative real parts,
then solution to problem (\ref{eq14}), (\ref{eq15}) in general case may have
several stable solutions $x(t)$.
The simple roots of these polynomials, under which the matrix $ M $ has
an eigenvalue with positive real part will correspond to an unstable
 solution $ x (t). $

\begin{example}

$$\left\{\begin{array}{l} {\frac{dx}{dt} = \alpha x+\beta u +u^2 +x^3}, \\ {x(0)=\Delta,} \\
ax^2 + 2b x u +u^2 = 0, 0\leq t <\infty. \end{array}\right. $$
Here $(0,0)$ is the rest point. Under assumption $u=cx, $ where $\, c $ is $ const$ 
we get the following quadratic equation 
$c^2+2bc +a =0.$
Then $u$ is double-valued 
\beq
u_{1,2} = x(t)(-b\pm \sqrt{b^2-a}).
\eeq
Let $a<b^2.$

We substitute the found values of the function $ u $ into the differential equation.
Then the problem of determining the function $ x(t) $ is reduced to the solution of two
 Cauchy problems
$$\left\{\begin{array}{l} {\frac{dx_ {\pm}}{dt} = (\alpha +\beta(-b\pm \sqrt{b^2-a}) x_{\pm}  
+ (-b\pm \sqrt{b^2-a})^2 x_{\pm}^2 + x_{\pm}^3}, \\ {x_{\pm}(0) = \Delta.}\\ 
\end{array}\right. 
$$
Let $\alpha+\beta(-b - \sqrt{b^2-a})<0.$ Then exists branch
 $x_{-}(t)$  for small $|\Delta|$ for $t\geq 0$ and stabilizing to zero as $t\rightarrow +\infty.$

\end{example}

\section{Possible generalizations}

In this work until now only autonomus systems were considered. This  can be relaxed.
For example, if we have system
$$\left\{\begin{array}{l} { \opA \frac{dx}{dt} = (\opA_1 +\tilde{\opA}_1(t))x(t) + (\opA_2 +\tilde{\opA}_2(t)) u(t) +\opR(x,u,t)  }, \\ 0 = \opA_3 x +\opA_4 u + \opr(x,u,t),\\ 
\end{array}\right. 
$$
 where $\tilde{\opA}_1(t) \rightarrow 0,$ $\tilde{\opA}_2(t) \rightarrow 0$
as $t\rightarrow +\infty,$
$||\opR(x,u,t)||=o(||x||+||u||)$
and $||\opr(x,u,t)||=o(||x||+||u||)$
for $||x||+||u|| \rightarrow 0$  coverges uniformly $t\geq 0,$
 then results of Theorem 1 remains correct.

In the theory of systems (\ref {eq1}), (\ref {eq2}), the most difficult case was when
the Frechet derivative $ \frac {\partial} {\partial u} \opG(x, u) $ is not invertible at the rest point, and therefore the implicit operator theorem is not fulfilled for the map $ \opG(x, u) = 0$.

In sec. 5   only one case was adressed when this condition is not satisfied and
 solution is branching. Other more complex cases of branching solutions can be investigated using the results of modern analytical branching theory
solutions of non-linear equations obtained in the works of V.A. Trenogin, B.V. Loginov, N.A. Sidorov, A.D. Bruno,  M.G. Krein,  J.~Toland \cite{sid82, lit11, lit12, lit9, lit4, lit1, lit8} et al.
Equally interesting is the problem of analyzing systems (\ref {eq1}), (\ref {eq2}) with a discontinuity
in a neighborhood of the rest points, when the stability condition in the first approximation is not satisfied, and more advanced methods must be used, for example, methods related to the construction
Lyapunov functions, to evaluate the location of potential blow-up points using
method of convex majorants of L.V. Kantorovich used in works \cite{lit10, lit7,litF}.

In this case, when developing algorithms for analyzing stability and constructing estimates of the regions of attraction of the rest points of the power systems of input-output type, it is expedient to use methods based on the theory of the Lyapunov vector-function.

Finally, it is interesting to consider the system (\ref {eq1}), (\ref {eq2})
with rest points for an irreversible operator $ \opA. $
In this case, the standard Cauchy problem can has no classical solutions and it is advisable to introduce other initial conditions.
If the irreversible operator $ \opA $ admits a finite-length skeleton decomposition, then new correct initial conditions for the problem (\ref {eq1}), (\ref {eq2}) can be formulated using
the results of the works \cite{lit12,  lit14}.

\acknowledgements{\rm This work is 
fulfilled as part of the programm for Irkutsk State University development for  2015--2019 under the project ``Singular operator-differential systems of equations and mathematical models with parameters''. It is partly supported by the programm of international 
scientific collaboration of China and Russia under Grant No. 2015DFR70850, NSFC 
grant  No. 61673398 and programm of fundamental research of SB RAS, reg.~No.~ÀÀÀÀ-À17-117030310442-8, research project III.17.3.1. The results of this manuscript were partly reported on the Russian-Chinese Workshop "Mathematical Modeling of Renewable and Isolated Hybrid Power Systems", lake Baikal, 2–6 August 2017 \cite{isu1,isu2}.}

\end{document}